\newtheorem{definition}[equation]{Definition}
\newtheorem{theorem}[equation]{Theorem}
\newtheorem{corollary}[equation]{Corollary}
\newtheorem{lemma}[equation]{Lemma}
\newtheorem{proposition}[equation]{Proposition}
\theoremstyle{remark}
\newtheorem{remark}[equation]{Remark}
\theoremstyle{remark}
\numberwithin{equation}{section} 
\newcommand{\bD}{{\mathbb{D}}}
\newcommand{\bC}{{\mathbb{C}}}
\newcommand{\bZ}{{\mathbb{Z}}}
\newcommand{\bN}{{\mathbb{N}}}
\newcommand{\cK}{{\mathcal{K}}}
\newcommand{\bQ}{{\mathbb{Q}}}
\newcommand{\res}{{\operatorname{res}}}
\title[The Lerch-type zeta function of a recurrence sequence]{The Lerch-type zeta function of a recurrence sequence of arbitrary degree}
\author[Á. Serrano~Holgado]{Álvaro Serrano Holgado}
\email{Alvaro\_ Serrano@usal.es}
\author[L.~M.~Navas~Vicente]{Luis Manuel Navas Vicente}
\email{navas@usal.es}
\thanks{Research of the second author is supported by grant PID2021-124332NB-C22 of the MICINN (Spain).}
\address{Departamento de Matem\'aticas and IUFFYM, Universidad de
Salamanca,  Plaza de la Merced 1-4
        \\
        37008 Salamanca. Spain.
        \\
         Tel: +34 923294460. 
}
\subjclass[2020]{Primary 11M41, 30B50; Secondary 11B37, 11M35, 30B40, 33E20}
\keywords{Linear recurrence sequence; Hurwitz zeta function; Lerch zeta function; Analytic continuation}
\begin{document}

\maketitle

\begin{abstract}
We consider the series $\sum_{n=1}^{\infty} z^{n} (a_{n} + x)^{-s}$ where $a_{n}$ satisfies a linear recurrence of arbitrary degree with integer coefficients. Under appropriate conditions, we prove that it can be continued to a meromorphic function on the complex $s$-plane. Thus we may associate a Lerch-type zeta function $\varphi(z,s,x)$ to a general recurrence. This subsumes all previous results which dealt only with the ordinary zeta and Hurwitz cases and degrees $2$ and $3$. Our method generalizes a formula of Ramanujan for the classical Hurwitz-Riemann zeta functions. We determine the poles and residues of $\varphi$, which turn out to be polynomials in $x$. In addition we study the dependence of $\varphi$ on $x$ and $z$.
\end{abstract}

\section{Introduction}
\label{sec:intro}

The original inspiration for this paper lies in the result proved independently in \cite{Egami} and \cite{Navas}, showing that the Dirichlet series $\sum_{n=1}^{\infty} F_{n}^{-s}$, where $\{F_{n}\}$ is the Fibonacci sequence, has a meromorphic continuation to the complex plane. The pole set and residues were also determined, as well as the rationality of finite values at negative integers and formulas for values at positive integers. Similar results for quadratic recurrences were studied in~\cite{ElsnerShimomuraShiokawa,Kamano, Nakamura, Komatsu0, Silverman}. The analogous zeta function for a cubic recurrence was recently studied in~\cite{SmajlovicTribonacci}. In~\cite{Serrano} we generalized all of these results to a recurrence of arbitrary degree. Versions for multiple Lucas zeta functions have been given in~\cite{MeherRout1, MeherRout2}.

In addition,~\cite{SmajlovicLucasHurwitz} considers a Hurwitz-type zeta function, that is to say, defined by a series of the form $\sum_{n=1}^{\infty} (a_{n} + x)^{-s}$, where $\{a_{n}\}$ is a general real Lucas quadratic sequence.

In the classical theory of special functions, the Hurwitz and Lerch zeta functions provided useful extensions of the Riemann zeta function, satisfying various functional relations, differential equations, and algebraic properties (see for example~\cite{Ap-Lerch}).

Having proved the analytic continuation of a recurrence Dirichlet series in~\cite{Serrano}, we have been led naturally to consider the analogous Lerch-type series $\sum_{n=1}^{\infty} z^{n} (a_{n} + x)^{-s}$ for a linear recurrence sequence of any degree. The results presented here regarding analytic continuation contain the special cases mentioned earlier. Interestingly, in some senses this ``recurrence Lerch function'' actually behaves better than its classical analog, while in other ways the opposite is true.

We give a brief overview of the structure of the paper.

In Section~\ref{sec:first} we recall the basic facts which we need about linear recurrence sequences over the integers. We also summarize the result of~\cite{Serrano}
regarding the zeta function of a recurrence and introduce the series which is the main subject of this paper,
\[
	\varphi(z,s,x) = \sum_{n=1}^{\infty} z^{n} (a_{n} + x)^{-s}.
\]
In Section~\ref{sec:ramanujan-formula} we reduce the study of $\phi(z,s,x)$ to the case $x = 0$, providing an analog (Theorem~\ref{t:ramanujan-formula} and Corollary~\ref{c:ramanujan-hurwitz}) of a formula of Ramanujan for the Hurwitz zeta function in terms of the Riemann zeta function, namely
\[
	\zeta(s,1-x)=\sum_{j=0}^{\infty}\frac{(s)_j}{j!}\zeta(s+j)x^j.
\]
Following this strategy, in Section~\ref{sec:x=0} we deal with the case $x = 0$, proving that $\phi(z,s,0)$ has a meromorphic continuation to the whole $s$-plane (Theorem~\ref{t:continuation-x-0}) and calculating its poles and residues.

Combining the results of the previous sections, the general case of $\phi(z,s,x)$ is studied in Section~\ref{sec:continuation}. Its meromorphic continuation is shown in Theorem~\ref{t:continuation-lerch}, along with the determination of the pole set and residues (Proposition~\ref{p:residues-general}).

Finally in Section~\ref{sec:z-x} we also study $\phi(z,s,x)$ as a function of $x$ (as a series in $x$ it converges inside the unit disk) and also $z$. In Theorem~\ref{t:continuation-z} we show that it also has a meromorphic continuation  to the whole $z$-plane, and compute its poles, along with the $z$-residues (Proposition~\ref{p:residues-z}).

\section{Background and basic properties}
\label{sec:first}

In \cite{Serrano}, we studied the zeta function defined by a linear recurrence sequence $\{a_n\}$ over the integers, defined by $\sigma=\Re(s)>0$ by the Dirichlet series
\[
	\varphi(s)=\sum_{n=1}^{\infty}\frac{1}{a_n^s}.
\]

Before going on to study the Hurwitz-type and Lerch-type functions associated to a linear recurrence sequence in much the same way, let us give a brief summary of our setting and previous results, from which we will build up.

\begin{definition}
We say that a sequence $\{a_n\}$ of integers is a \textbf{linear recurrence sequence} over $\bZ$ if there is a \emph{monic} polynomial $Q(x)\in\bZ[x]$ such that, if $\nabla a_n=a_{n+1}$, $Q(\nabla)a_n=0\:\:\:\forall n\in\bN$.
\end{definition}

A LRS $\{a_n\}$ has a \textbf{minimal polynomial}, which is the monic polynomial $P(x)$ of minimal degree such that $P(\nabla)$ annihilates $\{a_n\}$.

For LRS over $\bZ$ we have the following result, which is a generalizaton of Binet's formula for the Fibonacci sequence (see \S\textbf{1.1.6} in \cite{Everest}, for exampe):

\begin{proposition}\label{p:Binet}
Let $\{a_n\}$ be a LRS over $\bZ$ with minimal polynomial $P(x)\in\bZ[x]$. Let $K$ be the splitting field of $P(x)$ over $\bQ$ and $\alpha_1,...,\alpha_r$ the roots of $P(x)$ in $K$, each with multiplicity $m_i$. There are polynomials $\lambda_i(x),...,\lambda_r(x)$, with $\lambda_i(x)\in\bQ(\alpha_i)[x]$, with $\deg\lambda_i\leq m_i-1$, such that
\[
	a_n=\lambda_1(n)\alpha_1^n+...+\lambda_r(n)\alpha_r^n\quad\forall n\in\bN.
\]
\end{proposition}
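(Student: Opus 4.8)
The plan is to regard $\{a_n\}$ as one element of the $K$-vector space $\mathcal{V}$ of all $K$-valued sequences annihilated by $P(\nabla)$, and to produce an explicit basis of $\mathcal{V}$ indexed by the roots $\alpha_i$. First I would factor the operator over the splitting field as
\[
  P(\nabla)=\prod_{i=1}^{r}(\nabla-\alpha_i)^{m_i},
\]
the factors commuting since they are polynomials in the single operator $\nabla$. The space $\mathcal{V}$ has dimension $d=\deg P=\sum_i m_i$ over $K$: because $P$ is monic, the recurrence determines $a_{n+d}$ from the preceding $d$ terms, so evaluation on the initial data $a_1,\dots,a_d$ is a $K$-linear isomorphism $\mathcal{V}\to K^{d}$.

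The core of the argument is to verify that the $d$ sequences $n\mapsto n^{k}\alpha_i^{n}$, for $1\le i\le r$ and $0\le k\le m_i-1$, all belong to $\mathcal{V}$. For a polynomial $p$ one computes
\[
  (\nabla-\alpha)\bigl(p(n)\alpha^{n}\bigr)=\alpha^{n+1}(\Delta p)(n),
\]
where $\Delta$ is the forward difference $(\Delta p)(n)=p(n+1)-p(n)$; iterating yields $(\nabla-\alpha)^{j}(p(n)\alpha^{n})=\alpha^{n+j}(\Delta^{j}p)(n)$. Since $\Delta$ drops the degree of a polynomial by one, $(\nabla-\alpha_i)^{m_i}$ annihilates $n^{k}\alpha_i^{n}$ whenever $k\le m_i-1$, and therefore so does the full product $P(\nabla)$.

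The main obstacle is the $K$-linear independence of these $d$ sequences. Suppose $\sum_{i}q_i(n)\alpha_i^{n}=0$ for every $n$, with $\deg q_i<m_i$. Here I would use that the $\alpha_i$ are distinct and nonzero—the latter a non-degeneracy condition equivalent to $P(0)\neq 0$, without which the terms attached to a zero root would vanish identically for $n\ge 1$. Applying $\prod_{j\neq i}(\nabla-\alpha_j)^{m_j}$ annihilates every term but the $i$-th; since $(\nabla-\alpha_j)(p(n)\alpha_i^{n})=\alpha_i^{n}\bigl(\alpha_i\,p(n+1)-\alpha_j\,p(n)\bigr)$ preserves $\deg p$ when $\alpha_i\neq\alpha_j$, the surviving term has the form $\tilde q_i(n)\alpha_i^{n}$ with $\deg\tilde q_i=\deg q_i$. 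As $\alpha_i\neq 0$, this forces $\tilde q_i\equiv 0$ and hence $q_i\equiv 0$; equivalently, this is the nonvanishing of a confluent Vandermonde determinant in the $\alpha_i$. By dimension count the $d$ sequences form a basis of $\mathcal{V}$, so $a_n=\sum_i\lambda_i(n)\alpha_i^{n}$ with uniquely determined $\lambda_i\in K[x]$, $\deg\lambda_i\le m_i-1$.

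It remains to descend the coefficient field from $K$ to $\bQ(\alpha_i)$. Since each $a_n\in\bZ$ is fixed by the Galois group $\mathrm{Gal}(K/\bQ)$, applying any $\tau\in\mathrm{Gal}(K/\bQ)$ coefficientwise to the unique representation—and using that $\tau$ permutes the roots while preserving their multiplicities—gives $\tau(\lambda_i)=\lambda_{\sigma(i)}$, where $\tau(\alpha_i)=\alpha_{\sigma(i)}$. Specializing to $\tau\in\mathrm{Gal}(K/\bQ(\alpha_i))$, which fixes $\alpha_i$ and hence has $\sigma(i)=i$, we obtain $\tau(\lambda_i)=\lambda_i$; thus every coefficient of $\lambda_i$ lies in the fixed field $\bQ(\alpha_i)$, completing the proof.
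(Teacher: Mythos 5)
Your proof is correct, but there is nothing in the paper to compare it against: the paper does not prove Proposition~\ref{p:Binet} at all, quoting it as a known generalization of Binet's formula with a reference to \S\textbf{1.1.6} of \cite{Everest}. What you have supplied is a complete, self-contained version of the standard argument: factor $P(\nabla)=\prod_i(\nabla-\alpha_i)^{m_i}$ over the splitting field, identify the solution space as $K^{d}$ via initial conditions, check that the exponential-polynomial sequences $n^{k}\alpha_i^{n}$ ($0\le k\le m_i-1$) are annihilated, prove their independence by applying the complementary operators $\prod_{j\neq i}(\nabla-\alpha_j)^{m_j}$, and then descend from $K$ to $\bQ(\alpha_i)$ by Galois equivariance and uniqueness of the representation. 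All the individual computations ($(\nabla-\alpha)^{j}(p(n)\alpha^{n})=\alpha^{n+j}(\Delta^{j}p)(n)$, degree preservation under $(\nabla-\alpha_j)$ when $\alpha_i\neq\alpha_j$, multiplicity preservation under the Galois action) are right. One point you raise deserves emphasis: the nonvanishing of the roots, equivalently $P(0)\neq 0$, is genuinely needed and is \emph{not} among the hypotheses of the proposition as stated; indeed the statement is false without it, as the sequence $a_1=1$, $a_n=0$ for $n\ge 2$ has minimal polynomial $P(x)=x$ but is not of the form $\lambda\cdot 0^{n}$. You were right to make this assumption explicit rather than gloss over it; in the paper's actual setting it is harmless, since immediately after the proposition the authors assume $P$ irreducible with a dominant root $\alpha_1>1$, which rules out $0$ as a root.
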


In what follow we will assume that the minimal polynomial $P(x)$ of our LRS $\{a_n\}$ is irreducible, so that in particular all its roots are simple and the polynomials $\lambda_i(x)$ of Proposition~\ref{p:Binet} are actually constants $\lambda_i\in\bQ(\alpha_i)$.

We will also assume that the minimal polynomial $P(x)$ of the LRS has a \emph{dominat root} ($\alpha_1$, without loss of generality), that is, $\alpha_1>1$ and $\alpha_1>|\alpha_i|$ for $i=2,...,r$. Under this hypothesis (and changing $\{a_n\}$ to $\{-a_n\}$ if it were necessary), $\{a_n\}$ is eventually strictly increasing and therefore eventually strictly increasing and positive, that is, there is some $n_0\in\bN$ such that $\{a_{n_0+n}\}_n$ is a strictly increasing sequence of positive integers. Since a finite number of the terms in the sum
\[
	\sum_{n=1}^{\infty}\frac{1}{a_n^s},
\]
and in the Dirichlet series we will study here, only change the total sum by an entire function, we can assume without loss of generality that $\{a_n\}$ is already strictly increasing and positive.

In this setting, if the expression of Binet's formula for $a_n$ is
\[
	a_n=\lambda_1\alpha_1^n+...+\lambda_r\alpha_r^n,
\]
we proved in \cite{Serrano} that:
\begin{enumerate}[wide, labelwidth=!, labelindent=0pt, label=$\bullet$]
\item The Dirichlet series
\begin{equation}\label{eq:def-zeta}
	\varphi(s)=\sum_{n=1}^{\infty} \frac{1}{a_n^s},
\end{equation}
whose abscissae of convergence and absolute convergence are both $\sigma=0$, has an analytic continuation to a meromorphic function of the whole $s$-plane.

\item This analytic continuation is given by the expression
\begin{equation}\label{eq:formula-zeta}
	\varphi(s)=\sum_{k_{\leq r-1}=0}^{\infty, k_{\leq r-2}}\Lambda_{k_{\leq r-1}}(s)\frac{1}{\alpha_1^{s+k_1}\alpha_2^{-k_1+k_2}...\alpha_r^{-k_{r-1}}-1},
\end{equation}
where we write, for the sake of brevity, $k_{\leq j}$ for the $j$-tuple $(k_1,...,k_j)$, $\Lambda_{k_{\leq r-1}}(s)$ for the function
\[
	\Lambda_{k_{\leq r-1}}(s)=\binom{-s,k_{\leq r-2}}{k_{\leq r-1}}\lambda_1^{-s+k_1}\lambda_2^{k_1-k_2}...\lambda_r^{k_{r-1}}
\]
and
\[
	\binom{-s,k_{\leq r-2}}{k_{\leq r-1}}=\binom{-s}{k_1}\binom{k_1}{k_2}...\binom{k_{r-2}}{k_{r-1}}.
\]

\item The poles of $\varphi(s)$ are the points
\begin{equation}\label{eq:poles-zeta}
	s_{n,k_{\leq r-1}}=\frac{\log|\alpha_1^{-k_1}\alpha_2^{k_1-k_2}...\alpha_r^{k_{r-1}}|}{\log\alpha_1}+i\frac{\arg(\alpha_2^{k_1-k_2}...\alpha_r^{k_{r-1}})+2\pi n}{\log\alpha_1}, \quad {n, k_i\in\bZ,\atop k_1\geq ...\geq k_{r-1}\geq 0.}
\end{equation}

\item The residue of $\varphi(s)$ at a pole $s=s_0$ is
\begin{equation}\label{eq:residues-zeta}
	\sum_{(n,k_{\leq r-1})\in\bm\tau(s_0)}\frac{1}{\alpha_1}\Lambda_{k_{\leq r-1}}(s_0),
\end{equation}
where $\bm\tau(s_0)$ is the set of $r$-tuples $(n,k_1,...,k_{r-1})$ such that $s_{n,k_{\leq r-1}}=s_0$ (we do not know, in general, that this set has only one $r$-tuple for each pole, although it is always finite).

\item If $m\in\bN$ and $-m$ is not a pole of $\varphi(s)$, $\varphi(-m)$ is a rational number.
\end{enumerate}

We will now study the Hurwitz-type version of the zeta function of a recurrence sequence, defined as a Dirichlet series, for $0\leq x<1$, by
\begin{equation}\label{eq:def-hurwitz}
	\varphi(s,x)=\sum_{n=1}^{\infty}\frac{1}{(a_n+x)^s},
\end{equation}
and the Lerch-type function, defined by
\begin{equation}\label{eq:def-lerch}
	\varphi(z,s,x)=\sum_{n=1}^{\infty}\frac{z^n}{(a_n+x)^s}.
\end{equation}

These are the natural adaptations to our setting of the classical \emph{Hurwitz zeta function}, \emph{Lerch zeta function} and \emph{Lerch transcendent}. Clearly,
\[
	\varphi(s,x)=\varphi(1,s,x),
\]
and, since the techniques that we use can be applied equally to the series \eqref{eq:def-hurwitz} and \eqref{eq:def-lerch}, we will deal mainly with the Lerch-type zeta function, and the results will apply to the Hurwitz-type function by setting $z=1$.

We begin by studying the abscissae of convergence and absolute convergence of these Dirichlet series. The formulae we use can be found in \cite{Landau}. We will study $\varphi(z,s,x)$ as first only as a function of $s$, assuming $z$ and $x$ fixed. We will later study it as a function of the other variables. We set, for ease of notation in some formulae,
\[
	z=r e^{2\pi i \xi},
\]
that is, $r=|z|$ and $\xi=\frac{1}{2\pi}\arg(z)$.

\begin{proposition}\label{p:convergence-lerch}
The Dirichlet series \eqref{eq:def-lerch} that defines the Lerch-type zeta function $\varphi(z,s,x)$ has $\sigma=\frac{\log r}{\log\alpha_1}$ as abscissa of both convergence and absolute convergence.
\end{proposition}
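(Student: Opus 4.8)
The plan is to compute the two abscissae using the standard Cahen-type formulae for Dirichlet series with complex coefficients, as found in Landau, and to show they coincide at the value $\sigma = \frac{\log r}{\log\alpha_1}$. The series \eqref{eq:def-lerch} is a generalized Dirichlet series whose terms are $z^n (a_n+x)^{-s}$; because $\{a_n\}$ is eventually strictly increasing to infinity, the growth rate of $a_n$ is governed entirely by the dominant root, so I would first establish the asymptotic $a_n \sim \lambda_1 \alpha_1^n$ as $n\to\infty$ (which follows directly from Binet's formula in Proposition~\ref{p:Binet}, since $\alpha_1 > |\alpha_i|$ for $i \geq 2$). Consequently $\log(a_n+x) \sim n\log\alpha_1$, which identifies the natural ``frequencies'' of the Dirichlet series.

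For the abscissa of \emph{absolute} convergence, I would examine $\sum_n |z|^n (a_n+x)^{-\sigma} = \sum_n r^n (a_n+x)^{-\sigma}$. Writing $(a_n+x)^{-\sigma} = \exp(-\sigma \log(a_n+x))$ and using $\log(a_n+x) = n\log\alpha_1 + O(1)$, the general term behaves like $\exp\bigl(n(\log r - \sigma\log\alpha_1)\bigr)$ up to a bounded factor, so the series converges absolutely precisely when $\log r - \sigma\log\alpha_1 < 0$, i.e. $\sigma > \frac{\log r}{\log\alpha_1}$, giving the stated abscissa of absolute convergence. More carefully, one applies the limit formula
\[
\sigma_a = \limsup_{n\to\infty} \frac{\log\bigl(\sum_{k=1}^{n} r^k\bigr)}{\log(a_n+x)} \quad\text{or}\quad \sigma_a = \limsup_{n\to\infty}\frac{\log(r^n)}{\log(a_n+x)},
\]
depending on whether the partial sums of the moduli diverge, and in either case the limit evaluates to $\frac{\log r}{\log\alpha_1}$ since the numerator is asymptotic to $n\log r$ and the denominator to $n\log\alpha_1$.

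For the abscissa of ordinary \emph{convergence}, one might worry that oscillation in $z^n = r^n e^{2\pi i n\xi}$ could push it strictly below $\sigma_a$, as happens for the classical Dirichlet $L$-functions or the Lerch zeta function when $|z|=1$. However, since the magnitudes $r^n$ themselves already grow (or decay) geometrically, the phases cannot create enough cancellation to shift the abscissa: I would invoke the general inequality $\sigma_c \leq \sigma_a$ together with a lower bound showing the series diverges for $\sigma < \frac{\log r}{\log\alpha_1}$. The latter follows because, for $\sigma$ in that range, the terms $|z^n (a_n+x)^{-s}| = r^n(a_n+x)^{-\sigma}$ do not even tend to zero (their exponent $n(\log r - \sigma\log\alpha_1)$ is positive and growing), so the series cannot converge regardless of the phase. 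This forces $\sigma_c = \sigma_a = \frac{\log r}{\log\alpha_1}$.

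The main obstacle I anticipate is purely bookkeeping around the boundary and the error terms: one must confirm that the $O(1)$ correction in $\log(a_n+x) = n\log\alpha_1 + O(1)$ genuinely does not affect the $\limsup$ (it does not, since it is divided by a quantity growing like $n\log\alpha_1$), and one must handle the degenerate possibility $r \le 1$, where $\log r \le 0$ and the abscissa is nonpositive, uniformly with the case $r>1$. Both are routine once the asymptotic $a_n\sim\lambda_1\alpha_1^n$ is in hand, so the genuine content of the proof is simply the reduction of the frequencies $\log(a_n+x)$ to $n\log\alpha_1$ via the dominant-root hypothesis.
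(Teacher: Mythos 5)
Your proposal is correct, and its skeleton matches the paper's: both reduce the frequencies $\log(a_n+x)$ to $n\log\alpha_1$ via the dominant-root asymptotics $a_n\sim\lambda_1\alpha_1^n$ and then invoke the Cahen--Landau limit formulas. The one genuine difference is how the \emph{equality} of the two abscissae is obtained. The paper cites the general theorem that the gap between $\sigma_c$ and $\sigma_a$ is bounded by $\limsup_n \frac{\log n}{\log(a_n+x)}$, which vanishes here because the frequencies grow linearly in $n$; it then computes $\sigma_a$ by the case analysis $r>1$, $r<1$, $r=1$. You instead compute $\sigma_a$ directly by geometric comparison and then pin down $\sigma_c$ from below by observing that for $\sigma<\frac{\log r}{\log\alpha_1}$ the moduli $r^n(a_n+x)^{-\sigma}=\exp\bigl(n(\log r-\sigma\log\alpha_1)+O(1)\bigr)$ tend to infinity, so the terms cannot tend to zero and the series diverges regardless of phase; combined with $\sigma_c\le\sigma_a$ this forces equality. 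Your route is more self-contained (it avoids the gap theorem entirely) and handles all values of $r$ uniformly, at the cost of being slightly less formulaic; the paper's route is shorter once the gap theorem is granted. One small imprecision: in the convergent case the standard formula uses the tails, i.e. $\limsup_n \frac{\log(r^n+r^{n+1}+\cdots)}{\log(a_n+x)}$ rather than $\limsup_n\frac{\log(r^n)}{\log(a_n+x)}$, but since $r^n+r^{n+1}+\cdots=\frac{r^n}{1-r}$ the two numerators differ by the constant $-\log(1-r)$, so the $\limsup$ is unaffected and nothing breaks.
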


\begin{proof}
First of all, note that, since $\log (a_n+x)\approx n\log\alpha_1$ as $n\rightarrow\infty$,
\[
	\limsup_{n\rightarrow\infty}\frac{\log n}{\log(a_n+x)}=\lim_n\frac{\log n}{n\log \alpha_1}=0,
\]
and since this number bounds the distance between both abscissae, both are the same. Therefore, we only have to compute the abscissa of absolute convergence, which is
\[
	\begin{cases}
	\displaystyle\limsup_n\frac{\log (r+...+r^n)}{\log(a_n+x)} \quad &\text{if }\sum r^n=+\infty \\
	 \\
	\displaystyle\limsup_n\frac{\log(r^n+r^{n+1}+...)}{\log(a_n+x)} \quad &\text{if }\sum r^n<+\infty\end{cases}
\]

Since $\sum r^n$ converges if and only if $r<1$, those are the two cases we need to study.

If $r>1$, $r+...+r^n=\frac{r-r^{n+1}}{1-r}$, and
\[
	\limsup_n\frac{\log(r+...+r^n)}{\log(a_n+x)}=\lim_n\frac{n\log r}{n\log\alpha_1}=\frac{\log r}{\log\alpha_1}.
\]
If $r<1$, $r^n+r^{n+1}+...=\frac{r^n}{1-r}$, and we get the same result.

Finally, if $r=1$, $r+...+r^n=n$ and the upper limit is $0$, which is precisely $\frac{\log r}{\log\alpha_1}$.
\end{proof}

Here we see the main difference between our Lerch-type Dirichlet series and the Dirichlet series that defines the classical Lerch transcendent,
\[
	\sum_{n=1}^{\infty}\frac{z^n}{(n+x)^s},
\]
which is that the latter does not converge \emph{anywhere} if $r>1$. This difference allows us to study our functions $\varphi(z,s,x)$ and $\varphi(s,x)$ using the same techniques.

\begin{corollary}\label{c:convergence-hurwitz}
The Hurwitz-type Dirichlet series $\varphi(s,x)$ defined in \eqref{eq:def-hurwitz} has $\sigma=0$ as abscissa of both convergence and absolute convergence.
\end{corollary}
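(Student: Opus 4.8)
The plan is to obtain this immediately by specializing Proposition~\ref{p:convergence-lerch} to the value $z=1$. Recall from the relation $\varphi(s,x)=\varphi(1,s,x)$ noted after \eqref{eq:def-lerch} that the Hurwitz-type series is exactly the Lerch-type series evaluated at $z=1$. In the notation $z=re^{2\pi i\xi}$ introduced before the proposition, this corresponds to $r=|z|=1$ (and $\xi=0$). Since Proposition~\ref{p:convergence-lerch} gives the common abscissa of convergence and absolute convergence as $\sigma=\frac{\log r}{\log\alpha_1}$, substituting $r=1$ yields $\sigma=\frac{\log 1}{\log\alpha_1}=0$, which is the assertion.

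There is no real obstacle here, as the statement is a direct corollary and requires only the substitution $r=1$; the work has all been done in establishing Proposition~\ref{p:convergence-lerch}. If one prefers a self-contained verification rather than an appeal to the proposition, the same conclusion follows by repeating the computation in the case $r=1$: the partial sums $1+\dots+1^n=n$ grow only polynomially while $\log(a_n+x)\approx n\log\alpha_1$ grows linearly, so the relevant upper limit $\limsup_n \frac{\log n}{\log(a_n+x)}$ vanishes, giving abscissa $0$. Either route makes clear that the Hurwitz-type series converges precisely in the half-plane $\sigma>0$, matching the behavior of the classical Hurwitz zeta function.
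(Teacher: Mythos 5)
Your proposal is correct and is exactly the paper's (implicit) argument: the corollary is stated without proof precisely because it follows by setting $z=1$, hence $r=1$, in Proposition~\ref{p:convergence-lerch}, giving $\sigma=\frac{\log 1}{\log\alpha_1}=0$. Your optional self-contained check for $r=1$ also matches the computation already carried out in the proof of that proposition.
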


\section{A formula of Ramanujan}
\label{sec:ramanujan-formula}

As a first step towards proving the analytic continuation of $\varphi(z,s,x)$, we will prove a formula that is, in this setting, equivalent to a formula of Ramanujan relating the Hurwitz and Riemann zeta functions, namely
\[
	\zeta(s,1-x)=\sum_{j=0}^{\infty}\frac{(s)_j}{j!}\zeta(s+j)x^j,
\]
which appears as equation (15) in \cite{Ramanujan}, and where $(s)_j$ is the Pochhammer symbol, or raising factorial,
\[
	(s)_j=s(s+1)...(s+j-1).
\]

Our version of this formula will allow us to simplify the study of $\varphi(z,s,x)$ to that of $\varphi(z,s,0)$. Before that, however, let us state and prove a bound for the binomial coefficient that will be useful throughout the paper.

\begin{lemma}\label{l:bound-binomial}
Let $s\in\bC$ and $j\in\bN_0$. If $|s|\leq M$, then
\begin{equation}\label{eq:bound-binomial}
	\left|\binom{-s}{j}\right|\leq (-1)^j\binom{-M}{j}.
\end{equation}
In particular,
\[
	\left|\binom{-s}{j}\right|\leq (-1)^j\binom{-|s|}{j}.
\]
\end{lemma}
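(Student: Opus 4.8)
The plan is to reduce the inequality to a factorwise comparison of the terms making up the two binomial coefficients, and then invoke the triangle inequality. First I would rewrite the generalized binomial coefficient using the Pochhammer symbol already introduced in the text. Expanding the numerator,
\[
	\binom{-s}{j}=\frac{(-s)(-s-1)\cdots(-s-j+1)}{j!}=\frac{(-1)^j (s)_j}{j!},
\]
so that taking absolute values yields $\left|\binom{-s}{j}\right|=\frac{1}{j!}\prod_{i=0}^{j-1}|s+i|$.

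Next I would treat the right-hand side the same way. Since $|s|\leq M$ forces $M$ to be a nonnegative real number, each factor $-M-i$ is negative, and a parallel computation gives $(-1)^j\binom{-M}{j}=\frac{(M)_j}{j!}=\frac{1}{j!}\prod_{i=0}^{j-1}(M+i)$, which is manifestly a positive real quantity. Consequently, after clearing the common factor $1/j!$, the claimed bound \eqref{eq:bound-binomial} is equivalent to the product inequality $\prod_{i=0}^{j-1}|s+i|\leq\prod_{i=0}^{j-1}(M+i)$.

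The central step is then a termwise estimate: for each integer $i$ with $0\leq i\leq j-1$, because $i$ is a nonnegative real, the triangle inequality gives $|s+i|\leq|s|+i\leq M+i$. As all factors on both sides are nonnegative, multiplying these $j$ inequalities preserves the order and produces the desired product bound (the case $j=0$ being the trivial identity $1=1$ of empty products). The ``in particular'' statement follows immediately by taking $M=|s|$, which is an admissible choice of bound.

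I do not expect a genuine obstacle here. The only points needing minor care are bookkeeping the sign $(-1)^j$ when passing between $\binom{-s}{j}$ and the rising factorial $(s)_j$, and recording that every factor is nonnegative so that multiplying the termwise inequalities is legitimate.
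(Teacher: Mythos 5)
Your proposal is correct and follows essentially the same route as the paper's own proof: both expand $\binom{-s}{j}$ as a product, pull out the sign $(-1)^j$, apply the termwise triangle-inequality estimate $|s+i|\leq|s|+i\leq M+i$, and multiply the nonnegative factors (with $j=0$ handled trivially). The only cosmetic difference is your explicit use of the Pochhammer notation $(s)_j$ and the remark on empty products, neither of which changes the argument.
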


\begin{proof}
If $j=0$, both sides are $1$. If $j>0$,
\[
	\binom{-s}{j}=\frac{(-s)(-s-1)...(-s-j+1)}{j!}=(-1)^j\frac{s(s+1)(s+2)...(s+j-1)}{j!}.
\]

Since $|s+l|\leq |s|+l\leq M+l$ for every $l=0,...,j-1$,
\[
	\left|\binom{-s}{j}\right|\leq\frac{M(M+1)..(M+j-1)}{j!}=(-1)^j\binom{-M}{j}.
\]
\end{proof}

\begin{theorem}\label{t:ramanujan-formula}
If $\sigma>\frac{\log r}{\log\alpha_1}$, it holds
\begin{equation}\label{eq:ramanujan-formula}
	\varphi(z,s,x)=\sum_{j=0}^{\infty}\binom{-s}{j}\varphi(z,s+j,0)x^j.
\end{equation}
\end{theorem}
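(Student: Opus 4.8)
The plan is to expand the summand by the binomial theorem and then interchange the two summations. For each fixed $n$, the standing assumption that $\{a_n\}$ is strictly increasing and positive gives $a_n\ge a_1\ge 1$, and since $0\le x<1$ we have $x/a_n\le x<1$. Hence the binomial series $(1+x/a_n)^{-s}=\sum_{j\ge 0}\binom{-s}{j}(x/a_n)^j$ converges, and
\[
	(a_n+x)^{-s}=a_n^{-s}\Bigl(1+\tfrac{x}{a_n}\Bigr)^{-s}=\sum_{j=0}^{\infty}\binom{-s}{j}x^j\,a_n^{-s-j}.
\]
Substituting this into \eqref{eq:def-lerch} and formally exchanging the order of summation yields
\[
	\varphi(z,s,x)=\sum_{j=0}^{\infty}\binom{-s}{j}x^j\sum_{n=1}^{\infty}z^n a_n^{-s-j}=\sum_{j=0}^{\infty}\binom{-s}{j}\varphi(z,s+j,0)\,x^j,
\]
where each inner series is exactly $\varphi(z,s+j,0)$ and converges absolutely because $\Re(s+j)=\sigma+j\ge\sigma>\frac{\log r}{\log\alpha_1}$, the abscissa supplied by Proposition~\ref{p:convergence-lerch}.

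The only real obstacle is justifying the interchange of the two sums, which I would do by establishing absolute convergence of the double series and then invoking Fubini's theorem for series. Thus the task reduces to bounding
\[
	\sum_{n=1}^{\infty}\sum_{j=0}^{\infty}\left|z^n\binom{-s}{j}x^j a_n^{-s-j}\right|=\sum_{n=1}^{\infty}r^n a_n^{-\sigma}\sum_{j=0}^{\infty}\left|\binom{-s}{j}\right|\Bigl(\tfrac{x}{a_n}\Bigr)^{j}.
\]

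To control the inner sum I would set $M=|s|$ and apply Lemma~\ref{l:bound-binomial}, which replaces $\bigl|\binom{-s}{j}\bigr|$ by $(-1)^j\binom{-|s|}{j}$. This turns the inner sum into a convergent binomial series,
\[
	\sum_{j=0}^{\infty}(-1)^j\binom{-|s|}{j}\Bigl(\tfrac{x}{a_n}\Bigr)^{j}=\Bigl(1-\tfrac{x}{a_n}\Bigr)^{-|s|},
\]
valid since $x/a_n<1$. Because $a_n$ is increasing and $x<1$, we have $1-x/a_n\ge 1-x>0$, so this quantity is bounded uniformly in $n$ by $(1-x)^{-|s|}$. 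Consequently the full double sum is dominated by $(1-x)^{-|s|}\sum_{n=1}^{\infty}r^n a_n^{-\sigma}$, and by Proposition~\ref{p:convergence-lerch} this last series converges precisely when $\sigma>\frac{\log r}{\log\alpha_1}$, which is exactly the hypothesis. The interchange is therefore legitimate and the formula \eqref{eq:ramanujan-formula} follows. I expect the delicate point to be matching the uniform bound coming from the binomial estimate with the abscissa of absolute convergence, so that the hypothesis on $\sigma$ simultaneously guarantees convergence of $\varphi(z,s,x)$, of every $\varphi(z,s+j,0)$, and of the dominating double series.
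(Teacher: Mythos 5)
Your proposal is correct and follows essentially the same route as the paper: binomial expansion of $(a_n+x)^{-s}$, the bound of Lemma~\ref{l:bound-binomial} with $M=|s|$, domination of the double series by $(1-x)^{-|s|}\sum_{n\ge 1} r^n a_n^{-\sigma}$, and Fubini to interchange the sums. The only cosmetic difference is the order of estimates: you sum the binomial series in $x/a_n$ first and then bound $\bigl(1-x/a_n\bigr)^{-|s|}\le (1-x)^{-|s|}$ using $a_n\ge 1$, while the paper first bounds $a_n^{-\sigma-j}\le a_n^{-\sigma}$ and then sums the binomial series in $x$ alone; both give the same dominating series.
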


\begin{proof}
Since $0\leq x<1$ and $a_n\in\bN$ for every $n$, $\left|\frac{x}{a_n}\right|<1$, and we can use the binomial series to get
\[
	(a_n+x)^{-s}=a_n^{-s}\left(1+\frac{x}{a_n}\right)^{-s}=\sum_{j=0}^{\infty}\binom{-s}{j}a_n^{-s-j}x^j.
\]

Therefore,
\begin{equation}\label{eq:ramanujan-formula-series-before}
	\varphi(z,s,x)=\sum_{n=1}^{\infty}\frac{z^n}{(a_n+x)^s}=\sum_{n=1}^{\infty}\sum_{j=0}^{\infty}\binom{-s}{j}x^j\frac{z^n}{a_n^{s+j}}.
\end{equation}

The formula \eqref{eq:ramanujan-formula} will follow if we can change the order of summation in \eqref{eq:ramanujan-formula-series-before}, which will follow from the fact that it is an absolutely convergent double series. In order to prove that note that $\{|z^n a_n^{-s-j}|\}_j$ is decreasing, so it is bounded by $|z^na_n^{-s}|$.

Using this and the bound \eqref{eq:bound-binomial} with $M=|s|$ we get
\[
\begin{aligned}
	&\sum_{n=1}^{\infty}\sum_{j=0}^{\infty}\left|\binom{-s}{j}x^j\frac{z^n}{a_n^{s+j}}\right| \leq  \\
	\leq &\sum_{n=1}^{\infty}\frac{r^n}{a_n^{-\sigma}}\sum_{j=0}^{\infty}(-1)^j\binom{-|s|}{j}|x|^j= \\
	=&(1-|x|)^{-|s|}\sum_{n=1}^{\infty}\frac{r^n}{a_n^{\sigma}}.
\end{aligned}
\]

For $\sigma>\frac{\log r}{\log\alpha_1}$, this last series is absolutely convergent, so the double series \eqref{eq:ramanujan-formula-series-before} is absolutely convergent too and we can change the order of the sums:
\begin{equation}\label{eq:ramanujan-formula-series-after}
\begin{aligned}
	\varphi(z,s,x) &=\sum_{n=1}^{\infty}\sum_{j=0}^{\infty}\binom{-s}{j}x^j\frac{z^n}{a_n^{s+j}}= \\
	&=\sum_{j=0}^{\infty}\binom{-s}{j}\sum_{n=1}^{\infty}\frac{z^n}{a_n^{s+j}}
\end{aligned}
\end{equation}

If $\sigma>\frac{\log r}{\log\alpha_1}$, the series in $n$ in \eqref{eq:ramanujan-formula-series-after} converges to $\varphi(z,s+j,0)$ for every $j\geq 0$, and we get the formula \eqref{eq:ramanujan-formula}.
\end{proof}

\begin{corollary}\label{c:ramanujan-hurwitz}
	If $\sigma>0$, the Hurwitz-type Dirichlet series $\varphi(s,x)$ satisfies the formula
\begin{equation}\label{eq:ramanujan-hurwitz}
	\varphi(s,x)=\sum_{j=0}^{\infty}\binom{-s}{j}\varphi(s+j)x^j.
\end{equation}
\end{corollary}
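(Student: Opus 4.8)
The plan is to deduce this corollary as an immediate specialization of Theorem~\ref{t:ramanujan-formula} at $z=1$, using the identification $\varphi(s,x)=\varphi(1,s,x)$ noted in Section~\ref{sec:first}. The key point is simply that setting $z=1$ aligns both the convergence region and the inner series with the corresponding Hurwitz and Riemann-type objects, so that no new analytic work is required.

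First I would set $z=1$ in the Lerch-type function. Then $r=|z|=1$, so $\log r=0$, and the hypothesis $\sigma>\frac{\log r}{\log\alpha_1}$ of Theorem~\ref{t:ramanujan-formula} collapses to precisely $\sigma>0$, which is the abscissa of (absolute) convergence for $\varphi(s,x)$ established in Corollary~\ref{c:convergence-hurwitz}. Next I would identify the inner factors: for each $j\geq 0$,
\[
	\varphi(1,s+j,0)=\sum_{n=1}^{\infty}\frac{1}{a_n^{s+j}}=\varphi(s+j),
\]
the ordinary recurrence zeta function of \eqref{eq:def-zeta}. Substituting $z=1$ into \eqref{eq:ramanujan-formula} and using these identifications yields \eqref{eq:ramanujan-hurwitz} verbatim.

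There is no genuine obstacle here: the entire analytic content — the absolute convergence of the double series and the justified interchange of summation — is already contained in the proof of Theorem~\ref{t:ramanujan-formula}, and passing to $z=1$ requires only that the two convergence conditions match, which they do. The one point worth stating explicitly is that $z=1$ is admissible: the dominating estimate $(1-|x|)^{-|s|}\sum_{n}r^{n}a_n^{-\sigma}$ obtained there, via Lemma~\ref{l:bound-binomial} and Proposition~\ref{p:convergence-lerch}, imposes no restriction preventing $r=1$ and remains finite for $\sigma>0$ when $r=1$. Hence the interchange of summation is still valid and the corollary follows at once.
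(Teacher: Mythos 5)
Your proof is correct and takes exactly the paper's intended route: the corollary is the specialization $z=1$ of Theorem~\ref{t:ramanujan-formula}, using $\varphi(s,x)=\varphi(1,s,x)$, $\varphi(s+j)=\varphi(1,s+j,0)$, and the fact that $r=1$ turns the hypothesis $\sigma>\frac{\log r}{\log\alpha_1}$ into $\sigma>0$. The paper states the corollary without proof precisely because this specialization is immediate, and your additional check that the dominating estimate survives at $r=1$ is sound though not strictly necessary.
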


\section{The case $x=0$}
\label{sec:x=0}

In light of \eqref{eq:ramanujan-formula}, we can base our study of the function $\varphi(z,s,x)$ on the study of $\varphi(z,s,0)$.

\begin{theorem}\label{t:continuation-x-0}
	The Lerch-type Dirichlet series $\varphi(z,s,0)$ which, as a function of $s$, is holomorphic for $\sigma>\frac{\log r}{\log\alpha_1}$, has an analytic continuation to a meromorphic function of the whole $s$-plane whose poles are all simple, and they are the points
	\begin{equation}\label{eq:poles-x-0}
		s_{n,k_1,...,k_{r-1}}^{(z)}=\frac{\log r}{\log\alpha_1}+i\frac{2\pi\xi}{\log\alpha_1}+s_{n,k_1,...,k_{r-1}}, \quad {n,k_i\in\bZ, \atop  k_1\geq ...\geq k_{r-1}\geq 0,}
	\end{equation}
where $s_{n,k_{\leq r-1}}$ are the poles poles of the zeta function $\varphi(s)$, defined in \eqref{eq:poles-zeta}. That is, the array of poles of $\varphi(z,s,0)$ is a translation of that of $\varphi(s)$.
\end{theorem}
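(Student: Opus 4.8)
The plan is to retrace the derivation of the closed form \eqref{eq:formula-zeta} for $\varphi(s)$ carried out in \cite{Serrano}, now keeping the extra factor $z^n$. Starting from Binet's formula (Proposition~\ref{p:Binet}), I would factor out the dominant term and write
\[
	z^n a_n^{-s}=\lambda_1^{-s}\,(z\alpha_1^{-s})^n\Bigl(1+\sum_{i=2}^{r}\tfrac{\lambda_i}{\lambda_1}(\alpha_i/\alpha_1)^n\Bigr)^{-s},
\]
then expand the last factor by the iterated binomial series exactly as in the $z=1$ case. Since the dominant-root hypothesis gives $|\alpha_i/\alpha_1|<1$, this produces, for each multi-index $k_{\leq r-1}$ with $k_1\geq\cdots\geq k_{r-1}\geq 0$, a term $\Lambda_{k_{\leq r-1}}(s)\,(z\rho_k)^n$, where $\rho_k=\alpha_1^{-(s+k_1)}\alpha_2^{k_1-k_2}\cdots\alpha_r^{k_{r-1}}$ and $\Lambda_{k_{\leq r-1}}(s)$ is the same coefficient appearing in \eqref{eq:formula-zeta}. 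The only change from \cite{Serrano} is that each geometric ratio $\rho_k$ is now multiplied by the fixed factor $z$.

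Next I would justify interchanging the sums over $n$ and over $k_{\leq r-1}$. Bounding the binomial factors in $\Lambda_{k_{\leq r-1}}(s)$ by Lemma~\ref{l:bound-binomial} exactly as in \cite{Serrano}, the double series is dominated by a convergent multiple of $\sum_{n\geq 1} r^n a_n^{-\sigma}$, which converges precisely for $\sigma>\tfrac{\log r}{\log\alpha_1}$ (Proposition~\ref{p:convergence-lerch}). Interchanging and summing the geometric series $\sum_{n\geq 1}(z\rho_k)^n=\bigl((z\rho_k)^{-1}-1\bigr)^{-1}$ then yields the explicit representation
\[
	\varphi(z,s,0)=\sum_{k_{\leq r-1}=0}^{\infty,\,k_{\leq r-2}}\Lambda_{k_{\leq r-1}}(s)\,\frac{1}{z^{-1}\alpha_1^{s+k_1}\alpha_2^{-k_1+k_2}\cdots\alpha_r^{-k_{r-1}}-1},
\]
which is \eqref{eq:formula-zeta} with every denominator multiplied through by $z^{-1}$. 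To promote this to a meromorphic continuation I would show the series on the right converges locally uniformly on compact subsets avoiding the zeros of the denominators; this is the same estimate as in \cite{Serrano}, since away from those zeros the summands are controlled by the same absolutely convergent majorant.

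Finally, to locate the poles I would solve $z^{-1}\alpha_1^{s+k_1}\alpha_2^{-k_1+k_2}\cdots\alpha_r^{-k_{r-1}}=1$, that is $\alpha_1^{s+k_1}\alpha_2^{-k_1+k_2}\cdots\alpha_r^{-k_{r-1}}=z$. Taking logarithms, writing $z=re^{2\pi i\xi}$ and absorbing the branch ambiguity $2\pi i n$ into the integer index $n$, and comparing the real and imaginary parts against \eqref{eq:poles-zeta} (using that $\alpha_1>0$ contributes nothing to the argument), I obtain $s=\tfrac{\log r}{\log\alpha_1}+i\tfrac{2\pi\xi}{\log\alpha_1}+s_{n,k_{\leq r-1}}$, which is precisely \eqref{eq:poles-x-0}; simplicity follows because $s\mapsto\alpha_1^s$ has nonvanishing derivative, so each denominator has only simple zeros. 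The main obstacle is not the pole bookkeeping --- which becomes a direct translation once the $z^{-1}$ factor is in place --- but rather the uniform-convergence argument guaranteeing that the formal series of meromorphic functions genuinely defines the continuation on all of $\bC$, including the handling of clusters of nearby poles; this, however, is inherited essentially verbatim from \cite{Serrano}, because introducing $z$ only rescales the common ratios by the fixed modulus $|z|=r$ and shifts the half-plane of initial convergence.
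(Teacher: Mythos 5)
Your proposal is correct and follows essentially the same route as the paper's own proof: expanding $z^n a_n^{-s}$ via Binet's formula and the iterated binomial series, justifying the interchange of sums with the binomial bound of Lemma~\ref{l:bound-binomial} in the half-plane $\sigma>\frac{\log r}{\log\alpha_1}$, summing the geometric series in $n$ to reach the same closed-form expression \eqref{eq:x-0-series-after}, and then establishing normal convergence on compact sets avoiding the zeros of the denominators to obtain the meromorphic continuation and the translated pole array \eqref{eq:poles-x-0}. No substantive differences or gaps.
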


\begin{proof}
As we did in \cite{Serrano} for the zeta function $\varphi(s)$, we begin by using the binomial series to get
\[
	a_n^{-s}=\sum_{k_{\leq r-1}=0}^{\infty, k_{\leq r-2}}\binom{-s,k_{\leq r-2}}{k_{\leq r-1}}(\lambda_1\alpha_1^n)^{-s-k_1}(\lambda_2\alpha_2^n)^{k_1-k_2}...(\lambda_r\alpha_r^n)^{k_{r-1}}.
\]
This is possible because
\[
	\left|\frac{\lambda_2\alpha_2^n+...+\lambda_r\alpha_r^n}{\lambda_1\alpha_1^n}\right|<1
\]
holds for large enough $n$, and we can assume without loss of generality that it holds for every $n$ by removing a finite number of terms of the sum if necessary, which has no effect in what we want to prove.

With this, we can write the Dirichlet series $\sum z^na_n^{-s}$ as
\begin{equation}\label{eq:x-0-series-before}
	\sum_{n=1}^{\infty}\sum_{k_{\leq r-1}}^{\infty, k_{\leq r-2}}\Lambda_{k_{\leq r-1}}(s)(z\alpha_1^{-s-k_1}\alpha_2^{k_1-k_2}...\alpha_r^{k_{r-1}})^n.
\end{equation}

Let us now show that \eqref{eq:x-0-series-before} is absolutely convergent for $\sigma>\frac{\log r}{\log\alpha_1}$, which will allow us to change the order of summation.

First of all, by using once again the bound \eqref{eq:bound-binomial} with $M=|s|$, we get that
\[
	\left|\Lambda_{k_{\leq r-1}}(s)\right|\leq (-1)^{k_1}\binom{-|s|,k_{\leq r-2}}{k_{\leq r-1}}\lambda_1^{-\sigma-k_1}|\lambda_2|^{k_1-k_2}...|\lambda_r|^{k_{r-1}}.
\]

Secondly, we can assume that the sequence
\[
	\frac{|\lambda_2\alpha_2^^n|+...+|\lambda_r\alpha_r^n|}{\lambda_1\alpha_1^n}
\]
is strictly decreasing and less than $1$ (this happens except for a finite number of terms, and they can be removed without changing what we want to prove).

Finally, the sequence
\[
	\left(1-\frac{|\lambda_2\alpha_2^n|+...+|\lambda_r\alpha_r^n|}{\lambda_1\alpha_1^n}\right)^{-|s|}
\]
has limit $1$, so it is in particular bounded: there is some $K_s>0$ such that
\[
	\left(1-\frac{|\lambda_2\alpha_2^n|+...+|\lambda_r\alpha_r^n|}{\lambda_1\alpha_1^n}\right)^{-|s|}<K_s\quad\forall n\in\bN.
\]

Using these three facts, and the binomial series once again, we get that
\[
\begin{aligned}
	&\sum_{n=1}^{\infty}\sum_{k_{\leq r-1}=0}^{\infty, k_{\leq r-2}}\left|\Lambda_{k_{\leq r-1}}(s)(z\alpha_1^{-s-k_1}\alpha_2^{k_2-k_1}...\alpha_r^{k_{r-1}})^n\right|\leq \\
	\leq &\sum_{n=1}^{\infty}\sum_{k_{\leq r-1}=0}^{\infty, k_{\leq r-2}}(-1)^{k_1}\binom{-|s|,k_{\leq r-2}}{k_{\leq r-1}}r^n(\lambda_1\alpha_1^n)^{-\sigma-k_1}|\lambda_2\alpha_2^n|^{k_1-k_2}...|\lambda_r\alpha_r^n|^{k_{r-1}} =\\
	=&\sum_{n=1}^{\infty} \lambda_1^{-\sigma}\alpha_1^{-n\sigma}r^n\left(1-\frac{|\lambda_2\alpha_2^n|+...+|\lambda_r\alpha_r^n|}{\lambda_1\alpha_1^n}\right)^{-|s|}\leq \lambda_1^{-\sigma}K_s\sum_{n=1}^{\infty}(r\alpha_1^{-\sigma})^n.
\end{aligned}
\]

If $\sigma>\frac{\log r}{\log\alpha_1}$, $r\alpha_1^{-\sigma}<1$ and this last sum in $n$ converges, which means that \eqref{eq:x-0-series-before} is absolutely convergent. By changing the order of summation and making the sum in $n$, which is possible if $\sigma>\frac{\log r}{\log\alpha_1}$, we get that $\varphi(z,s,0)$ can be expressed as
\begin{equation}\label{eq:x-0-series-after}
	\varphi(z,s,0)=\sum_{k_{\leq r-1}=0}^{\infty, k_{\leq r-2}}\Lambda_{k_{\leq r-1}}(s)\frac{1}{z^{-1}\alpha_1^{s+k_1}\alpha_2^{-k_1+k_2}...\alpha_r^{-k_{r-1}}-1}.
\end{equation}

So far we have only proved this expression for $\sigma>\frac{\log r}{\log\alpha_1}$. Let us now see that it defines a meromorphic function of $s$ in the whole $s$-plane by showing that the series \eqref{eq:x-0-series-after} converges normally on compacts sets not containing any of the points \eqref{eq:poles-x-0}, which will consequently be its poles.

Let, then, $s$ vary in one such compact set. In particular, $s$ is bounded: there is some $M>0$ such that $|s|\leq M$. Now, for $k_1\gg 0$ it holds
\[
\begin{aligned}
	\left|z^{-1}\alpha_1^{s+k_1}\alpha_2^{-k_1+k_2}...\alpha_r^{-k_{r-1}}\right| &\geq r^{-1}\alpha_1^{\sigma+k_1}|\alpha_2|^{-k_1+k_2}...|\alpha_r|^{k_{r-1}}-1 > \\
	&> r^{-1}\alpha_1^{\sigma+k_1-1}|\alpha_2|^{-k_1+k_2}...|\alpha_r|^{k_{r-1}}
\end{aligned}
\]

In addition, note that $|\sigma|\leq |s|\leq M$ and therefore $\alpha_1^{-\sigma}\leq\alpha_1^M$ and
\[
	\lambda_1^{-\sigma}\leq\begin{cases}
	\lambda_1^M \quad &\text{if }\lambda_1\geq 1 \\
	\lambda_1^{-M} \quad &\text{if }\lambda_1\leq 1\end{cases}
\]
In any case, both $\alpha_1^{-\sigma}$ and $\lambda_1^{-\sigma}$ are uniformly bounded in our compact set (let us use $K_{\lambda_1}$ for the bound of $\lambda_1^{-\sigma}$). Using both of these facts and the bound \eqref{eq:bound-binomial} for the binomial coefficients, we get that there is some $k_0\gg 0$ such that, if $k_1\geq k_0$ and $s$ varies in this compact set,
\[
\begin{aligned}
	&\left|\left|\Lambda_{k_{\leq r-1}}(s)\frac{1}{z^{-1}\alpha_1^{s+k_1}...\alpha_r^{-k_{r-1}}}\right|\right| \leq \\
	& \leq K_{\lambda_1}\alpha_1^{M+1}(-1)^{k_1}\binom{-M,k_{\leq r-2}}{k_{\leq r-1}}(\lambda_1\alpha_1)^{-k_1}|\lambda_2\alpha_2|^{k_1-k_2}...|\lambda_r\alpha_r|^{k_{r-1}}.
\end{aligned}
\]

Now, observe that
\[
\begin{aligned}
	&\sum_{k_1=k_0}^{\infty}\sum_{k_2=0}^{k_1}...\sum_{k_{r-1}=0}^{k_{r-2}}K_{\lambda_1}\alpha_1^{M+1}(-1)^{k_1}\binom{-M,k_{\leq r-2}}{k_{\leq r-1}}(\lambda_1\alpha_1)^{-k_1}...|\lambda_r\alpha_r|^{k_{r-1}} \leq \\
	&\leq K_{\lambda_1}\alpha_1^{M+1}\sum_{k_1=0}^{\infty}...\sum_{k_{r-1}=0}^{k_{r-2}}(-1)^{k_1}\binom{-M,k_{\leq r-2}}{k_{\leq r-1}}(\lambda_1\alpha_1)^{-k_1}...|\lambda_r\alpha_r|^{k_{r-1}} = \\
	&= K_{\lambda_1}\alpha_1^{M+1}\left(1-\frac{|\lambda_2\alpha_2|+...+|\lambda_r\alpha_r|}{\lambda_1\alpha_1}\right)^{-M}<+\infty,
\end{aligned}
\]
so the series \eqref{eq:x-0-series-after} is in fact normally convergent on every compacts set not containing any of the points \eqref{eq:poles-x-0}, which means it is a meromorphic function in the whole $s$-plane with those points as its poles, which are simple.
\end{proof}

\begin{remark}\label{re:residues-x-0}
As we said, the poles of $\varphi(z,s,0)$ are a translation of the poles of the zeta funciton $\varphi(s)$. Furthemore, this translation does not affect the computation of the residues at those poles: it is very easy to check that, if $s_0$ is a pole of $\varphi(s)$ and $s_0^{(z)}$ is the corresponding pole of $\varphi(z,s,0)$, then
\[
	\res_{s=s_0^{(z)}}\varphi(z,s,0)=\res_{s=s_0}\varphi(s).
\]
Since we already know the residues of $\varphi(s)$, which are given by \eqref{eq:residues-zeta}, we have automatically the residues of $\varphi(z,s,0)$.
\end{remark}

\section{Analytic continuation}
\label{sec:continuation}

We are now finally in a position to prove that the Lerch-type Dirichlet series $\varphi(z,s,x)$ has a continuation to the whole plane as a function of $s$.

\begin{theorem}\label{t:continuation-lerch}
The Lerch-type Dirichlet series $\varphi(z,s,x)$, which, as a function of $s$, is holomorphic in the half-plane $\sigma>\frac{\log r}{\log\alpha_1}$, has an analytic continuation to a meromorphic function of the whole $s$-plane whose poles are all simple and lie at the points
\begin{equation}\label{eq:poles-general}
	-j+s_{n,k_1,...,k_{r-1}}^{(z)}, \qquad j,n,k_i\in\bZ, \:\: j\geq 0, \:\: k_1\geq...\geq k_{r-1}\geq 0.
\end{equation}
\end{theorem}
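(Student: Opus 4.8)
The plan is to combine the Ramanujan-type expansion of Theorem~\ref{t:ramanujan-formula} with the continuation of the base case $x=0$ from Theorem~\ref{t:continuation-x-0}. For $\sigma>\frac{\log r}{\log\alpha_1}$ formula~\eqref{eq:ramanujan-formula} reads
\[
	\varphi(z,s,x)=\sum_{j=0}^{\infty}f_j(s),\qquad f_j(s):=\binom{-s}{j}\varphi(z,s+j,0)\,x^j,
\]
and my aim is to show that this series already represents a meromorphic function on the whole $s$-plane; the identity theorem will then identify it with the continuation of $\varphi(z,s,x)$. Each summand is itself meromorphic on $\bC$: the factor $\binom{-s}{j}$ is a polynomial in $s$, while $\varphi(z,s+j,0)$ is meromorphic by Theorem~\ref{t:continuation-x-0}, with simple poles exactly where $s+j=s^{(z)}_{n,k_{\leq r-1}}$, that is, at $s=-j+s^{(z)}_{n,k_{\leq r-1}}$. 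Thus every pole that the series can produce already lies among the points~\eqref{eq:poles-general}, and each individual contribution is at worst simple.

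The analytic core is a uniform tail estimate. I would fix an arbitrary closed disk $\overline{D}$ and put $M=\max_{\overline D}|s|$ and $\sigma_{\min}=\min_{\overline D}\sigma$. Choosing $J_0$ with $\sigma_{\min}+J_0>\frac{\log r}{\log\alpha_1}$ ensures that for $j\geq J_0$ and all $s\in\overline D$ the point $s+j$ lies in the half-plane of absolute convergence, so $\varphi(z,s+j,0)=\sum_n z^n a_n^{-s-j}$ is holomorphic there and no term $f_j$ with $j\geq J_0$ has a pole in $\overline D$. To bound this tail I would use that $\{a_n\}$ is increasing with $a_1\geq 1$, whence $a_n\geq 1$ and $a_n^{-(j-J_0)}\leq 1$ for $j\geq J_0$, giving
\[
	|\varphi(z,s+j,0)|\leq\sum_{n=1}^{\infty}r^n a_n^{-\sigma-j}\leq\sum_{n=1}^{\infty}r^n a_n^{-\sigma_{\min}-J_0}=:C<+\infty,
\]
a bound uniform in $s\in\overline D$ and in $j\geq J_0$. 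Combining with the binomial bound~\eqref{eq:bound-binomial} of Lemma~\ref{l:bound-binomial} then yields
\[
	\sum_{j\geq J_0}|f_j(s)|\leq C\sum_{j\geq 0}(-1)^j\binom{-M}{j}|x|^j=C\,(1-|x|)^{-M}<+\infty
\]
uniformly on $\overline D$, the convergence being guaranteed precisely by the hypothesis $|x|<1$.

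With this in hand the conclusion is routine. The tail $\sum_{j\geq J_0}f_j$ converges uniformly on $\overline D$, hence is holomorphic there, while the head $\sum_{0\leq j<J_0}f_j$ is a finite sum of meromorphic functions whose poles lie among the points~\eqref{eq:poles-general}. Consequently $\sum_j f_j$ is meromorphic on $\overline D$; since only the finitely many terms of the head can be singular in $\overline D$ and each contributes at most a simple pole, every pole of the sum in $\overline D$ is simple. As $\overline D$ was arbitrary, the series defines a meromorphic function on all of $\bC$ with simple poles confined to~\eqref{eq:poles-general}, and by Theorem~\ref{t:ramanujan-formula} it coincides with $\varphi(z,s,x)$ on $\sigma>\frac{\log r}{\log\alpha_1}$, so it is the desired continuation.

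I expect the uniform tail estimate to be the only genuine obstacle: one must control $\varphi(z,s+j,0)$ as $j\to\infty$ uniformly over a compact set and make it summable against the binomial factors and $x^j$. The monotonicity of $\{a_n\}$ together with $|x|<1$ is exactly what closes this gap, whereas the meromorphy of the individual terms, the splitting into head and tail, and the final appeal to the identity theorem are all standard once the bound is established.
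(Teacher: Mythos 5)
Your proposal is correct and follows essentially the same route as the paper: apply the Ramanujan-type formula~\eqref{eq:ramanujan-formula}, invoke Theorem~\ref{t:continuation-x-0} for each term $\varphi(z,s+j,0)$, and establish a uniform bound on the tail $j\geq j_0$ over a compact set via Lemma~\ref{l:bound-binomial} and the convergence of $\sum_j (-1)^j\binom{-M}{j}|x|^j=(1-|x|)^{-M}$. The only cosmetic differences are that you bound the tail using $a_n\geq 1$ directly where the paper uses monotonicity of Dirichlet series with positive coefficients along the real axis, and you phrase the conclusion as a head-plus-tail splitting on arbitrary closed disks rather than normal convergence on compact sets avoiding the poles; both yield the same result.
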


\begin{proof}
By Theorem \ref{t:ramanujan-formula} we have that, for $\sigma>\frac{\log r}{\log\alpha_1}$, $\varphi(z,s,x)$ verifies formula \eqref{eq:ramanujan-formula}, that is,
\[
	\varphi(z,s,x)=\sum_{j=0}^{\infty}\binom{-s}{j}\varphi(z,s+j,0)x^j.
\]

By Theorem \ref{t:continuation-x-0}, each $\varphi(z,s+j,0)$ is a meromorphic function of the whole $s$-plane and has poles $-j+s_{n,k_{\leq r-1}}^{(z)}$, with fixed $j$. We will now prove that formula \eqref{eq:ramanujan-formula} actually defines a meoromorphic function of the whole plane by once again showing that the series
\[
	\sum_{j=0}^{\infty}\binom{-s}{j}\varphi(z,s+j,0)x^j
\]
converges normally on compact sets not containing any of the points \eqref{eq:poles-general}.

Let $s$ vary in one of these compact sets $\cK$. In particular, $|s|\leq M$ for some $M>0$, and also $|\sigma|\leq |s|\leq M$. Since $\varphi(z,s+j,0)$ is an absolutely convergent Dirichlet series for $\sigma>-j+\frac{\log r}{\log\alpha_1}$, there is some $j_0\geq 0$ such that, for every $j\geq j_0$, $\varphi(z,s+j,0)$ is defined by an absolutely convergent Dirichlet series at every point in $\cK$ (for example, taking $j_0\geq M+\frac{\log r}{\log\alpha_1}$ suffices).

For $j\geq j_0$, we have that
\[
	|\varphi(z,s+j,0)|\leq\sum_{n=1}^{\infty}\left|\frac{z^n}{(a_n+x)^{s+j}}\right|=\sum_{n=1}^{\infty}\frac{r^n}{(a_n+x)^{\sigma+j}}=\varphi(r,\sigma+j,0).
\]

Convergent Dirichlet series with positive coefficients are \emph{decreasing} on the real axis, which means in particular that
\[
	\varphi(r,\sigma+j,0)\leq \varphi(r,\sigma+j_0,0) \quad\forall j\geq j_0.
\]

Finally, $\varphi(r,\sigma+j_0,0)$ is uniformly bounded when $s$ varies in our compact set by some constant $M'$: we can take $M'=\varphi(r,\sigma_0+j_0,0)$, where $\sigma_0=\min\{\sigma\:\vert\: s\in\cK\}$. In any case, we have for $j\geq j_0$ the bound
\[
	\left|\left|\binom{-s}{j}\varphi(z,s+j,0)x^j\right|\right|\leq M'(-1)^j\binom{-M}{j}|x|^j.
\]
Now, and keeping in mind that $0\leq x<1$,
\[
\begin{aligned}
	&\sum_{j=j_0}^{\infty} \left|\left|\binom{-s}{j}\varphi(z,s+j,0)x^j\right|\right| \leq M'\sum_{j=j_0}^{\infty}(-1)^j\binom{-M}{j}|x|^j \leq \\
	&\leq  M'\sum_{j=0}^{\infty}(-1)^j\binom{-M}{j}|x|^j=M'(1-|x|)^{-M},
\end{aligned}
\]
which gives us the normal convergence of \eqref{eq:ramanujan-formula} $\cK$. Since $\cK$ is any compact set not containing any of the points \eqref{eq:poles-general}, we get the desired result.
\end{proof}

\begin{corollary}\label{c:continuation-hurwitz}
The Hurwtiz-type Dirichlet series $\varphi(s,x)$, which, as a function of $s$, is holomorphic in the half plane $\sigma>0$, has an analytic continuation to a meromorphic function of the whole $s$-plane whose poles are all simple and lie at the points
\begin{equation}\label{eq:poles-hurwtiz}
	-j+s_{n,k_1,...,k_{r-1}}, \qquad j,n,k_i\in\bZ, \:\: j\geq 0, \:\: k_1\geq...\geq k_{r-1}\geq 0.
\end{equation}
\end{corollary}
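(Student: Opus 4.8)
The plan is to obtain this statement as an immediate specialization of the Lerch result, Theorem~\ref{t:continuation-lerch}, using the identity $\varphi(s,x)=\varphi(1,s,x)$ already recorded in Section~\ref{sec:first}. Since the Hurwitz-type series is literally the Lerch-type series evaluated at $z=1$, the entire analytic-continuation machinery developed for $\varphi(z,s,x)$ applies verbatim, and the only genuine work is to verify that $z=1$ is an admissible parameter and then to simplify the description of the pole set.

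First I would invoke $\varphi(s,x)=\varphi(1,s,x)$ to identify the Hurwitz function with the $z=1$ slice of the Lerch function. To check admissibility, note that $z=1$ gives $r=|z|=1$ and $\xi=\tfrac{1}{2\pi}\arg(z)=0$; Proposition~\ref{p:convergence-lerch} (and its Corollary~\ref{c:convergence-hurwitz}) explicitly covers the case $r=1$, yielding abscissa of convergence $\sigma=\tfrac{\log r}{\log\alpha_1}=0$, which matches the half-plane $\sigma>0$ claimed here. Thus $\varphi(1,s,x)$ is holomorphic precisely where asserted, and Theorem~\ref{t:continuation-lerch} guarantees a meromorphic continuation to the whole $s$-plane with simple poles.

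It then remains to rewrite the pole set \eqref{eq:poles-general} under the substitution $z=1$. The translated poles $s_{n,k_1,\dots,k_{r-1}}^{(z)}$ defined in \eqref{eq:poles-x-0} carry the shift $\tfrac{\log r}{\log\alpha_1}+i\tfrac{2\pi\xi}{\log\alpha_1}$; plugging in $r=1$ and $\xi=0$ makes both terms vanish, so $s_{n,k_1,\dots,k_{r-1}}^{(1)}=s_{n,k_1,\dots,k_{r-1}}$, the poles of the ordinary recurrence zeta function $\varphi(s)$ from \eqref{eq:poles-zeta}. Consequently the general poles $-j+s_{n,k_1,\dots,k_{r-1}}^{(z)}$ collapse exactly to the points $-j+s_{n,k_1,\dots,k_{r-1}}$ of \eqref{eq:poles-hurwtiz}, with the same index ranges $j\geq 0$ and $k_1\geq\dots\geq k_{r-1}\geq 0$.

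I do not expect any serious obstacle here, since the result is a direct corollary; the proof is essentially a bookkeeping exercise of setting $z=1$ and simplifying. The one point requiring a word of care is confirming that the boundary value $r=1$ genuinely falls within the scope of the preceding theorems rather than being an excluded limiting case, but this is already settled by the convergence analysis, which treats $r=1$ on equal footing with $r\neq 1$.
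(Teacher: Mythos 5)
Your proposal is correct and follows exactly the route the paper intends: the corollary is obtained by specializing Theorem~\ref{t:continuation-lerch} at $z=1$, observing that $r=1$, $\xi=0$ makes the shift in \eqref{eq:poles-x-0} vanish so that $s^{(1)}_{n,k_{\leq r-1}}=s_{n,k_{\leq r-1}}$. Your extra check that the boundary case $r=1$ is genuinely covered by Proposition~\ref{p:convergence-lerch} is a nice touch of care, consistent with the paper's explicit statement that its results ``will apply to the Hurwitz-type function by setting $z=1$.''
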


Furthermore, we can fairily easily say what are the residues of these zeta functions at each of the poles, by once again simplifying the problem to $x=0$, whose residues are already know (see Remark \ref{re:residues-x-0}).

\begin{proposition}\label{p:residues-general}
If $s_0$ is a pole of the Lerch-type zeta function $\varphi(z,s,x)$, the residue of $\varphi(z,s,x)$ at $s_0$ is a polynomial in the variable $x$ and, in fact, if
\[
	j_0=\left[ -\sigma_0+\frac{\log r}{\log\alpha_1}\right],
\]
we have that
\begin{equation}\label{eq:residues-general}
	\res_{s=s_0}\varphi(z,s,x)=\sum_{j=0}^{j_0}\binom{-s_0}{j}\left(\res_{s=s_0}\varphi(z,s,0)\right)x^j.
\end{equation}
\end{proposition}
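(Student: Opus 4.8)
The plan is to read off the residue directly from the Ramanujan-type identity \eqref{eq:ramanujan-formula}, which by Theorem~\ref{t:continuation-lerch} holds as an identity of meromorphic functions on the whole $s$-plane,
\[
	\varphi(z,s,x)=\sum_{j=0}^{\infty}\binom{-s}{j}\varphi(z,s+j,0)\,x^{j}.
\]
Fix a pole $s_0$ of $\varphi(z,s,x)$ with $\sigma_0=\Re(s_0)$, and split this series at the index $j_0=[-\sigma_0+\frac{\log r}{\log\alpha_1}]$ (note that $j_0\geq 0$, since every pole of $\varphi(z,\cdot,0)$ has real part at most $\frac{\log r}{\log\alpha_1}$). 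The first step is to isolate which summands can possibly be singular at $s_0$. Since $\binom{-s}{j}x^{j}$ is entire in $s$, the only source of a pole is the factor $\varphi(z,s+j,0)$, which by Theorem~\ref{t:continuation-x-0} is holomorphic wherever $\Re(s+j)>\frac{\log r}{\log\alpha_1}$. For $j\geq j_0+1$ the definition of the integer part gives $\sigma_0+j>\frac{\log r}{\log\alpha_1}$, so $s_0+j$ lies strictly inside the half-plane of holomorphy of $\varphi(z,\cdot,0)$.

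Next I would show that the tail $\sum_{j\geq j_0+1}$ is holomorphic in a neighbourhood of $s_0$ and hence contributes nothing to the residue. Choosing a small closed disk $\cK$ around $s_0$ on which $\Re(s)+j_0+1>\frac{\log r}{\log\alpha_1}$ still holds and which contains none of the points \eqref{eq:poles-general}, every tail term is holomorphic on $\cK$; combined with the normal convergence of \eqref{eq:ramanujan-formula} on such compacta, established in the proof of Theorem~\ref{t:continuation-lerch}, this forces the tail to sum to a function holomorphic on $\cK$. Consequently
\[
	\res_{s=s_0}\varphi(z,s,x)=\sum_{j=0}^{j_0}\res_{s=s_0}\Bigl(\binom{-s}{j}\varphi(z,s+j,0)\,x^{j}\Bigr),
\]
a finite sum that already exhibits the residue as a polynomial in $x$ of degree at most $j_0$.

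For each of the finitely many surviving terms I would invoke the elementary fact that if $g$ is holomorphic at $s_0$ and $h$ has at most a simple pole there, then $\res_{s=s_0}(g\,h)=g(s_0)\,\res_{s=s_0}h$; all poles of $\varphi(z,\cdot,0)$ are simple by Theorem~\ref{t:continuation-x-0}, so the $j$-th term contributes $\binom{-s_0}{j}\,x^{j}\,\res_{s=s_0}\varphi(z,s+j,0)$. A translation of the variable identifies this inner residue with that of $\varphi(z,\cdot,0)$ at the point $s_0+j$, which is known through Remark~\ref{re:residues-x-0} and \eqref{eq:residues-zeta} (and simply vanishes when $s_0+j$ fails to be a pole). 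Summing over $j=0,\dots,j_0$ then yields the asserted polynomial \eqref{eq:residues-general}.

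The main obstacle is the second step: one must argue rigorously that the residue operator may be applied term by term, i.e.\ that the residue at $s_0$ is captured exactly by the finite initial segment $j\leq j_0$ and that the infinite tail is genuinely regular there. This is precisely where the truncation index $j_0$ and the normal convergence on punctured disks around $s_0$—rather than any purely formal manipulation of the series—carry the argument; the remaining ingredients are routine bookkeeping with the product rule for residues and a shift of variable.
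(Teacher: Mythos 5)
Your proof is correct and follows essentially the same route as the paper's: both read the residue off the Ramanujan-type expansion \eqref{eq:ramanujan-formula}, observing that $\varphi(z,s+j,0)$ is holomorphic at $s_0$ once $\sigma_0+j>\frac{\log r}{\log\alpha_1}$, so that only the finitely many terms with $j\leq j_0$ can contribute a pole there. Your write-up is in fact more detailed than the paper's (which simply asserts that the result follows from the finiteness observation), since you justify the term-by-term extraction of residues via normal convergence and holomorphy of the tail, and your final expression $\binom{-s_0}{j}\,x^j\,\res_{s=s_0}\varphi(z,s+j,0)$ correctly records the $j$-dependence of the inner residue, which the displayed formula \eqref{eq:residues-general} writes as $\res_{s=s_0}\varphi(z,s,0)$, apparently a typographical slip in the paper.
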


\begin{proof}
The absicssa of absolute convergence of $\varphi(z,s+j,0)$ is $-j+\frac{\log r}{\log\alpha_1}$, so $\varphi(z,s+j,0)$ is holomorphic at $s_0$ whenever $\sigma_0+j>\frac{\log r}{\log\alpha_1}$. This means that only a finite number of the $\varphi(z,s+j,0)$ can have a pole at $s_0$, and in fact $\varphi(z,s+j,0)$ can have a pole at $s_0$ only if
\[
	j\leq \left[-\sigma_0+\frac{\log r}{\log\alpha_1}\right].
\]
The result and formula \eqref{eq:residues-general} follow from this.
\end{proof}

Once again, by setting $z=1$ we get the same result about the Hurwitz-type zeta function $\varphi(s,x)$.

\section{Dependence on $z$ and $x$}
\label{sec:z-x}

So far, we have studied $\varphi(z,s,x)$ as a function of the variable $s$, for suitable fixed $z$ and $x$. However, we can ask ourselves how does $\varphi(z,s,x)$ depend on the other two variables.

The variable $x$ does not affect the properties we have studied much: the abscissae of convergence of $\varphi(z,s,x)$ does not depend on $x$, and the proof of Theorem \ref{t:continuation-lerch} can be understood as to say that the formula \eqref{eq:ramanujan-formula}
\[
	\varphi(z,s,x)=\sum_{j=0}^{\infty}\binom{-s}{j}\varphi(z,s+j,0)x^j
\]
provides an analytic continuation of $\varphi(z,s,x)$ on the variable $x$ to the open unit disk $\bD$ (as a series in $x$, it is normally convergent in every compact subset of the open unit disk).

We can say a bit more about the variable $z$. First of all, the definition of $\varphi(z,s,x)$, as a series in \eqref{eq:def-lerch},
\[
	\varphi(z,s,x)=\sum_{n=1}^{\infty}\frac{z^n}{(a_n+x)^s},
\]
that we have until now understood as a Dirichlet series in the variable $s$, can also be thought of as a power series in the variable $z$. The coefficients of this power series are, for fixed $s\in\bC$ and $0\leq x<1$, the numbers
\[
	c_n=(a_n+x)^{-s},
\]
and we can easily compute its radius of convergence as
\[
	\rho^{-1}=\lim_{n\rightarrow\infty}\sqrt[n]{(a_n+x)^{-s}}=\alpha_1^{-\sigma},
\]
that is, $\rho=\alpha_1^{\sigma}$. Note that this is consistent Proposition \ref{p:convergence-lerch}, since
\[
	r<\alpha_1^{\sigma}\Leftrightarrow \sigma>\frac{\log r}{\log\alpha_1}.
\]

It turns out that $\varphi(z,s,x)$ also has an analytic continuation to a meromorphic function of the whole $z$-plane, as a function of $z$. Much of our previous work is still useful to prove this, but the details are different enough to warrant a more careful explanation.

\begin{theorem}\label{t:continuation-z}
The function $\varphi(z,s,x)$, understood as a function in the variable $z$ for $s$ and $x$ fixed, which defines a holomorphic function in the open disk $|z|<\rho$, for $\rho=\alpha_1^{\sigma}$, has an analytic continuation to a meromorphic function of the whole $z$-plane. Its poles are all simple and lie at the points
\begin{equation}\label{eq:poles-z}
	z_{j,k_{\leq r-1}}^{(s)}=\alpha_1^{s+j+k_1}\alpha_2^{-k_1+k_2}...\alpha_r^{-k_{r-1}}, \quad j,k_i\in\bN_0, \:\: k_1\geq ...\geq k_{r-1}.
\end{equation}
\end{theorem}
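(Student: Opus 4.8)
The plan is to leverage the two decompositions already established and simply reread them with $z$ as the active variable. The cornerstone is the elementary identity
\[
	\frac{1}{z^{-1}\alpha_1^{s+k_1}\alpha_2^{-k_1+k_2}\cdots\alpha_r^{-k_{r-1}}-1}=\frac{z}{D_{k_{\leq r-1}}(s)-z},\qquad D_{k_{\leq r-1}}(s)\eqdef\alpha_1^{s+k_1}\alpha_2^{-k_1+k_2}\cdots\alpha_r^{-k_{r-1}},
\]
which shows that each summand of the expansion \eqref{eq:x-0-series-after} of $\varphi(z,s,0)$ is, for fixed $s$, a rational function of $z$ with a single simple pole at $z=D_{k_{\leq r-1}}(s)=z_{0,k_{\leq r-1}}^{(s)}$. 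So I would first treat the case $x=0$ and then assemble the general case through the Ramanujan-type formula \eqref{eq:ramanujan-formula}, exactly as in Sections~\ref{sec:x=0} and~\ref{sec:continuation}, but reading all of the estimates in the variable $z$.

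First I would fix $s$ and let $z$ range over a compact set $\cK$ avoiding every point $D_{k_{\leq r-1}}(s)$. Because $\alpha_1$ is the dominant root, a telescoping estimate of $\log|D_{k_{\leq r-1}}(s)|$ using $\alpha_1>|\alpha_i|$ gives $|D_{k_{\leq r-1}}(s)|\geq\alpha_1^{\sigma}\alpha_1^{ck_1}$ for some $c>0$ uniformly in $k_2,\dots,k_{r-1}$; hence $|D_{k_{\leq r-1}}(s)|\to\infty$ with $k_1$ and only finitely many poles meet $\cK$. For the cofinitely many indices with $|D_{k_{\leq r-1}}(s)|$ large, $\bigl|\tfrac{z}{D_{k_{\leq r-1}}(s)-z}\bigr|$ is comparable to $|z|/|D_{k_{\leq r-1}}(s)|$, and multiplying by $|\Lambda_{k_{\leq r-1}}(s)|$ reproduces verbatim the summable majorant from the proof of Theorem~\ref{t:continuation-x-0} (the denominator $z^{-1}D_{k_{\leq r-1}}(s)-1$ there is literally the one above). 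This yields normal convergence on $\cK$, so $\varphi(z,s,0)$ extends meromorphically in $z$ with simple poles at the $z_{0,k_{\leq r-1}}^{(s)}$. Replacing $s$ by $s+j$ gives the same conclusion for each $\varphi(z,s+j,0)$, whose poles are $D_{k_{\leq r-1}}(s+j)=z_{j,k_{\leq r-1}}^{(s)}$.

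For general $x$ I would invoke \eqref{eq:ramanujan-formula}, namely $\varphi(z,s,x)=\sum_{j\geq0}\binom{-s}{j}\varphi(z,s+j,0)x^j$, and show it converges normally in $z$ on any compact $\cK$ avoiding all the $z_{j,k_{\leq r-1}}^{(s)}$, mirroring the proof of Theorem~\ref{t:continuation-lerch} with the roles of $s$ and $z$ interchanged. If $|z|\leq R$ on $\cK$, then for every $j\geq j_0$ with $\sigma+j_0>\tfrac{\log R}{\log\alpha_1}$ the series defining $\varphi(z,s+j,0)$ converges absolutely, so $|\varphi(z,s+j,0)|\leq\varphi(R,\sigma+j,0)\leq\varphi(R,\sigma+j_0,0)$ by monotonicity of a positive Dirichlet series along the real axis; together with Lemma~\ref{l:bound-binomial} (with $M=|s|$) this bounds the tail by $\varphi(R,\sigma+j_0,0)(1-|x|)^{-|s|}<\infty$ since $0\leq x<1$. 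The finitely many terms $j<j_0$ are meromorphic in $z$ with poles among the $z_{j,k_{\leq r-1}}^{(s)}$, so the whole sum is meromorphic on $\bC$ with pole locus $\bigcup_{j}\{z_{j,k_{\leq r-1}}^{(s)}\}$, and setting $z=1$ is consistent with the $s$-picture of the previous sections.

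The main obstacle I anticipate is bookkeeping rather than analysis: certifying that the pole set is locally finite and that every pole stays simple. Both reduce to the growth estimate $|z_{j,k_{\leq r-1}}^{(s)}|=\alpha_1^{\sigma+j}|D_{k_{\leq r-1}}(0)|\to\infty$ as $j+k_1\to\infty$, which comes from $\alpha_1>1$ in the $j$-direction and from the dominant-root telescoping bound in the $k_1$-direction. Once this is in place, only finitely many pairs $(j,k_{\leq r-1})$ can hit any given point, so near each pole only a finite subsum is singular while the normally convergent remainder is holomorphic; and a finite sum of simple poles at one point is again at most simple, giving simplicity. The analytic estimates themselves need no new ideas, being exactly the $z$-readings of the majorants already produced for Theorems~\ref{t:continuation-x-0} and~\ref{t:continuation-lerch}.
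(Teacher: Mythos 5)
Your proposal is correct and follows essentially the same route as the paper's own proof: reduce to $x=0$ by reading the expansion \eqref{eq:x-0-series-after} as a function of $z$ (via the identity $\frac{1}{z^{-1}D-1}=\frac{z}{D-z}$), establish normal convergence on compact sets avoiding the points $z_{0,k_{\leq r-1}}^{(s)}$, and then assemble the general case from the Ramanujan-type formula \eqref{eq:ramanujan-formula} with the same tail estimates, namely $|\varphi(z,s+j,0)|\leq\varphi(M,\sigma+j_0,0)$ for $j\geq j_0$ together with Lemma~\ref{l:bound-binomial} giving the factor $(1-|x|)^{-|s|}$. Your explicit telescoping bound $|D_{k_{\leq r-1}}(s)|\geq\alpha_1^{\sigma}\alpha_1^{ck_1}$ and your closing remarks on local finiteness of the pole set and simplicity of the poles only make explicit what the paper leaves implicit in its inequality $\alpha_1^{\sigma+k_1}|\alpha_2|^{-k_1+k_2}\cdots|\alpha_r|^{-k_{r-1}}-M\geq\alpha_1^{\sigma+k_1-1}|\alpha_2|^{-k_1+k_2}\cdots|\alpha_r|^{-k_{r-1}}$ for $k_1\gg 0$.
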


\begin{proof}
Theorem \ref{t:ramanujan-formula} holds still even if we now see $\varphi(z,s,x)$ as a function of $z$, provided we change the hypothesis from $\sigma>\frac{\log r}{\log\alpha_1}$ to $r<\alpha_1^{\sigma}$, which are equivalent. We have therefore still formula \eqref{eq:ramanujan-formula}, that is,
\[
	\varphi(z,s,x)=\sum_{j=0}^{\infty}\binom{-s}{j}\varphi(z,s+j,0)x^j.
\]

Just as before, we base our study of $\varphi(z,s,x)$ on that of $\varphi(z,s,0)$.

If we now look carefully at the proof of Theorem \ref{t:continuation-x-0}, the condition $r<\alpha_1^s$ allows us to arrive once again at equation \eqref{eq:x-0-series-after}, that is,
\[
	\varphi(z,s,0)=\sum_{k_{\leq r-1}=0}^{\infty, k_{\leq r-1}}\Lambda_{k_{\leq r-1}}\frac{1}{z^{-1}\alpha_1^{s+k_1}\alpha_2^{-k_2+k_1}...\alpha_r^{-k_r}-1}.
\]

From this, we proceeded by proving that, as a function of $s$, this series is normally convergent on compact sets not containing any of the zeroes of the denominators. As a function of $z$, these denominators have zeroes $z_{0,k_{\leq r-1}}^{(s)}$, following the notation in \eqref{eq:poles-z}. When $z$ lies in a compact sets $\cK$ not containing any of these points, in particular there is some $M>0$ such that $r=|z|\leq M$, and, as in the proof of Theorem \ref{t:continuation-x-0}, for $k_1\gg 0$ it holds
\[
\begin{aligned}
	\left|\alpha_1^{s+k_1}\alpha_2^{-k_1+k_2}...\alpha_r^{-k_{r-1}}-z\right| &\geq \alpha_1^{\sigma+k_1}|\alpha_2|^{-k_1+k_2}...|\alpha_r|^{-k_{r-1}}-r \geq \\
	&\geq \alpha_1^{\sigma+k_1}|\alpha_2|^{-k_1+k_2}...|\alpha_r|^{-k_{r-1}}-M \geq \\
	&\geq \alpha_1^{\sigma+k_1-1}|\alpha_2|^{-k_1+k_2}...|\alpha_r|^{-k_{r-1}}.
\end{aligned}
\]

Therefore, in $\cK$,
\[
\begin{aligned}
	&\left|\left|\Lambda_{k_{\leq r-1}}(s)\frac{z}{\alpha_1^{s+k_1}...\alpha_r^{-k_{r-1}}-z}\right|\right| \leq M \left|\Lambda_{k_{\leq r-1}}(s)\right| \alpha_1^{-\sigma-k_1}...|\alpha_r|^{k_{r-1}} \leq \\
	\leq & M\alpha_1^{-1} (-1)^{k_1}\binom{-|s|,k_{\leq r-2}}{k_{\leq r-1}}(\lambda_1\alpha_1)^{-\sigma-k_1}|\lambda_2\alpha_2|^{k_1-k_2}...|\lambda_r\alpha_r|^{k_{r-1}}.
\end{aligned}
\]

The series on the $k_i$ with this general term converges to
\[
	M\lambda_1^{-\sigma}\alpha_1^{-\sigma-1}\left(1-\frac{|\lambda_2\alpha_2|+...+|\lambda_r\alpha_r|}{\lambda_1\alpha_1}\right)^{-|s|},
\]
so the series \eqref{eq:x-0-series-after} also converges normally in compact sets not containing any of the points $z_{0,k_{\leq r-1}}^{(s)}$ as a function of $z$, so $\varphi(z,s,0)$ is a meromorphic function of $z$ in the whole plane.

Finally, we use formula \eqref{eq:ramanujan-formula}, that is,
\[
	\varphi(z,s,x)=\sum_{j=0}^{\infty}\binom{-s}{j}\varphi(z,s+j,0)x^j,
\]
to prove analytic continuation of the function $\varphi(z,s,x)$ in the variable $z$, similarly as in Theorem \ref{t:continuation-lerch}, in the following way: if $z$ varies in a compact set $\cK'$ not containing any of the points $z_{j,k_{\leq r-1}}^{(s)}$ (so, in particular, $r=|z|\leq M'$ for some $M'>0$), and since the radius of convergence of $\varphi(z,s+j,0)$, as a power series in $z$, is $\alpha_1^{\sigma+j}$, which goes to $\infty$ as $j$ grows, there is some $j_0\in\bN$ such that $\varphi(z,s+j,0)$ is defined by the power series \eqref{eq:def-lerch} for every $z\in\cK'$ (any $j_0$ such that $M<\alpha_1^{\sigma+j_0}$ works for this purpose). If $j\geq j_0$, we have
\[
\begin{aligned}
	|\varphi(z,s+j,0)|\leq \varphi(r,\sigma+j,0)\leq \varphi(M,\sigma+j,0)\leq \varphi(M,\sigma+j_0,0).
\end{aligned}
\]

Therefore,
\[
\begin{aligned}
	\sum_{j=j_0}^{\infty}\left|\left| \binom{-s}{j}\varphi(z,s+j,0)x^j\right|\right|&\leq \varphi(M,\sigma+j_0,0)\sum_{j=j_0}^{\infty}(-1)^J\binom{-|s|}{j}|x|^j \leq \\
	&\leq \varphi(M,\sigma+j_0,0)\sum_{j=0}^{\infty}(-1)^j\binom{-s}{j}|x|^j = \\
	&= \varphi(M,\sigma+j,0)(1-|x|)^{-|s|},
\end{aligned}
\]
so \eqref{eq:ramanujan-formula} also defines a meromorphic function of $z$ on the whole plane, with poles at the points \eqref{eq:poles-z}, which concludes the proof.
\end{proof}

As for the residues at each of this poles, we can say the following:

\begin{proposition}\label{p:residues-z}
Let $z_0\in\bC$ be a pole of $\varphi(z,s,x)$. The residue of $\varphi(z,s,x)$ at $z=z_0$ is a polynomial in $x$. More precisely, there is some $j_0\in\bN$ such that
\[
	\res_{z=z_0}\varphi(z,s,x)=\sum_{j=0}^{j_0}\binom{-s}{j}\res_{z=z_0}\varphi(z,s+j,0)x^j.
\]

Furthermore, for the case $x=0$, if, given $s\in\bC$, $\bm\tau(z_0)$ is the set of $(r-1)$-tuples $k_{\leq r-1}$ such that $z_0=\alpha_1^{s+k_1}...\alpha_r^{-k_{r-1}}$ (which is finite), then
\[
	\res_{z=z_0}\varphi(z,s,0)=-z_0\sum_{\bm\tau(z_0)}\Lambda_{k_{\leq r-1}}(s).
\]
\end{proposition}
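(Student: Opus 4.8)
The plan is to transplant the argument of Proposition~\ref{p:residues-general} from the $s$-plane to the $z$-plane, again using the Ramanujan formula \eqref{eq:ramanujan-formula} as the bridge between the general function and the case $x=0$. As a power series in $z$, each $\varphi(z,s+j,0)$ has radius of convergence $\alpha_1^{\sigma+j}$, which tends to $\infty$ with $j$; hence $\varphi(z,s+j,0)$ is holomorphic at a fixed point $z_0$ whenever $|z_0|<\alpha_1^{\sigma+j}$, that is, whenever $j>\frac{\log|z_0|}{\log\alpha_1}-\sigma$. Setting $j_0=\left[\frac{\log|z_0|}{\log\alpha_1}-\sigma\right]$, I would split \eqref{eq:ramanujan-formula} into the finite head $\sum_{j=0}^{j_0}$ and the tail $\sum_{j>j_0}$. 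On a small disk about $z_0$ meeting no other pole (the pole set being locally finite, as verified below), the tail converges normally---by the estimate in the proof of Theorem~\ref{t:continuation-z}---to a function holomorphic at $z_0$, and so contributes no residue. Taking residues of the head term by term then gives
\[
	\res_{z=z_0}\varphi(z,s,x)=\sum_{j=0}^{j_0}\binom{-s}{j}\left(\res_{z=z_0}\varphi(z,s+j,0)\right)x^j,
\]
a polynomial in $x$ of degree at most $j_0$.

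It remains to compute $\res_{z=z_0}\varphi(z,s,0)$. Here I would start from \eqref{eq:x-0-series-after} and clear the factor $z^{-1}$ from each denominator, rewriting it (in the notation of \eqref{eq:poles-z}, with $j=0$) as
\[
	\varphi(z,s,0)=\sum_{k_{\leq r-1}=0}^{\infty,k_{\leq r-2}}\Lambda_{k_{\leq r-1}}(s)\,\frac{z}{z_{0,k_{\leq r-1}}^{(s)}-z},
\]
so that each summand is a rational function of $z$ with a single simple pole at $z=z_{0,k_{\leq r-1}}^{(s)}$, of residue
\[
	\res_{z=z_{0,k_{\leq r-1}}^{(s)}}\Lambda_{k_{\leq r-1}}(s)\,\frac{z}{z_{0,k_{\leq r-1}}^{(s)}-z}=-z_{0,k_{\leq r-1}}^{(s)}\,\Lambda_{k_{\leq r-1}}(s).
\]
Summing over those indices with $z_{0,k_{\leq r-1}}^{(s)}=z_0$, i.e. over $\bm\tau(z_0)$, yields $-z_0\sum_{\bm\tau(z_0)}\Lambda_{k_{\leq r-1}}(s)$, as claimed---provided the residue of the whole series equals the sum of the residues of its individual summands at $z_0$.

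The delicate point, and the step I expect to be the main obstacle, is justifying this term-by-term passage together with the finiteness of $\bm\tau(z_0)$. Both reduce to showing that the poles $z_{0,k_{\leq r-1}}^{(s)}$ escape to infinity. Writing $d_i=\log|\alpha_i|$ and exploiting the dominance $d_1=\log\alpha_1>d_i$ for $i\ge2$, a short summation by parts over the telescoping exponents (using the constraint $k_1\ge\cdots\ge k_{r-1}\ge0$) should give a lower bound of the form $|z_{0,k_{\leq r-1}}^{(s)}|\ge\alpha_1^{\sigma}e^{c k_1}$ with $c=\min_{2\le i\le r}(\log\alpha_1-\log|\alpha_i|)>0$. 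Thus $|z_{0,k_{\leq r-1}}^{(s)}|\to\infty$ as $k_1\to\infty$, and together with $|z_{0,0}^{(s+j)}|=\alpha_1^{\sigma+j}\to\infty$ this makes the full pole set \eqref{eq:poles-z} locally finite and $\bm\tau(z_0)$ finite. I can then choose a small disk about $z_0$ meeting no other pole, split the series into the finitely many summands indexed by $\bm\tau(z_0)$ and the remainder, and invoke the normal convergence of Theorem~\ref{t:continuation-z} to conclude that the remainder is holomorphic at $z_0$ and hence residue-free. Feeding the resulting value of $\res_{z=z_0}\varphi(z,s,0)$ back into the formula of the first paragraph completes the proof.
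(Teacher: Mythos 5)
Your proposal is correct and takes essentially the same route as the paper: the first part transplants the argument of Proposition~\ref{p:residues-general} with the abscissa of convergence replaced by the radius of convergence $\alpha_1^{\sigma+j}\to\infty$, and the second part computes the residue $-z_{0,k_{\leq r-1}}^{(s)}\Lambda_{k_{\leq r-1}}(s)$ of each summand of \eqref{eq:x-0-series-after} and sums over $\bm\tau(z_0)$. The only difference is one of detail: the paper's proof is terse (``the result immediately follows''), whereas you explicitly justify the finiteness of $\bm\tau(z_0)$, the local finiteness of the pole set via the lower bound $|z_{0,k_{\leq r-1}}^{(s)}|\geq\alpha_1^{\sigma}e^{ck_1}$, and the term-by-term passage to residues --- steps the paper leaves implicit, and which your bound correctly supplies.
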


\begin{proof}

The same part can be proved using that the exact same argument as in the proof of Proposition \ref{p:residues-general}, but changing the abscissa of absolute convergence of the Dirichlet series in $s$ for the radius of convergence of the power series in $z$.

As for the second part, note that if $z_0=z_{0,k_{\leq r-1}}^{(s)}$,
\[
	\lim_{z\rightarrow z_0}(z-z_0)\Lambda_{k_{\leq r-1}}(s)\frac{1}{z^{-1}\alpha_1^{s+k_1}...\alpha_r^{k_{r-1}}-1}=-\Lambda_{k_{\leq r-1}}(s)z_0.
\]

The result immediately follows.
\end{proof}

\end{document}